\documentclass[12pt]{amsart}
\usepackage{latexsym,fancyhdr,amssymb,color,amsmath,amsthm,graphicx,listings,comment}
\usepackage[section]{placeins}
\newtheorem{thm}{Theorem}

\let\paragraph\subsection

\title{Euler Characteristics of Random Manifolds}
\author{Oliver Knill}
\date{July 26, 2026}
\address{Department of Mathematics \\ Harvard University \\ Cambridge, MA, 02138 }
\subjclass{}

\keywords{Integral geometry, Random Manifolds}

\begin{document}
\maketitle

\begin{abstract}
We prove that the expectation of the Euler characteristic $\chi(H)$ of random level surface
$H$ in a given simplicial complex $G$ is ${\rm E}[\chi(H)]=2-2K(G)-\chi(G)$, where
$K(G)=1-f_0(G)/2+f_1(G)/3-...$ is the curvature functional of $G$.
For odd-dimensional manifold $G$, we obtain random even-dimensional sub manifolds with 
Euler characteristic expectation ${\rm E}[\chi(H)]=2-2K(G)$. 
For $3$-manifolds for example, where $K(G)=1-f_0/2+f_1/3-f_2/4+f_3/5$ with
Dehn-Sommerville relations $f_3=2 f_2$ and $\chi(G)=f_0-f_1+f_2-f_3=0$ we have 
${\rm E}[\chi(H)]=f_1/3-f_2/5$. Our result also shows that the expectation
of the f-vector of a submanifold is explicitly linked to the f-vector of the host manifold. 
\end{abstract}

\section{Definitions}
\paragraph{}
A {\bf finite abstract simplicial complex} $G$ is a finite set of non-empty sets closed under 
the operation of taking non-empty subsets. 
The {\bf star} $U(x)=\{ y \in G, x \subset y\}$ is the smallest open set containing $x \in G$.
The {\bf closure} $\overline{A}$ of a subset $A \subset G$ is the smallest simplicial complex that contains
$A$. If $x \in G$, its {\bf unit sphere} is $S(x)= \overline{U(x)} \setminus U(x)$. 
The sets $\{ U(x)\}_{x \in G}$ define a basis for the {\bf Alexandroff topology} 
on $G$. It is classical finite topology but non-Hausdorff if the maximal dimension $q$ of 
$G$ is positive.

\begin{figure}[!htpb]
\scalebox{0.85}{\includegraphics{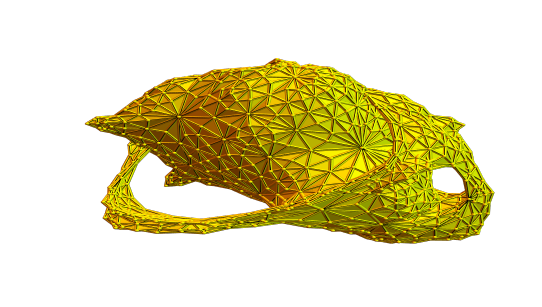}}
\scalebox{0.85}{\includegraphics{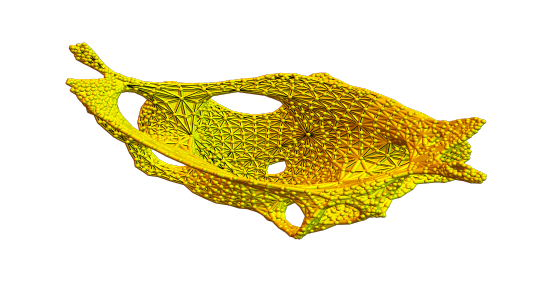}}
\label{Random manifold}
\caption{
To the left we see a random manifold in a $3$-sphere $G$.
To the right we see a random $2$-manifold with boundary that has been
chosen randomly in  $3$-ball $G$.
}
\end{figure}

\paragraph{}
With $\omega(x) = (-1)^{\rm dim}(x)$, the {\bf Euler characteristic} of $G$ is $\chi(G)=\sum_{x \in G} \omega(x)$. 
Inductively, $G$ is called a {\bf Dehn-Sommerville $q$-manifold} if for all $x \in G$, the unit sphere $S(x)$ is a Dehn-Sommerville 
$(q-1)$-manifold of Euler characteristic $\chi(S(x))=1-(-1)^q$. The induction starts with $0=\{\}$ being Dehn-Sommerville..
The {\bf join} $A \oplus B$ of two disjoint simplicial complexes
is $A \oplus B = A \cup B \cup \{ x \cup y, x \in A, y \in B\}$. Since $S_{A \oplus B}(x)=S(x) \oplus B$ if $x \in A$
and $S_{A \oplus B}(y) = A \oplus S(y)$ if $y \in B$, odd-dimensional Dehn-Sommerville manifolds form a sub-monoid
of the join monoid of all simplicial complexes. The {\bf zero element} is the {\bf empty complex} $0=\{\}$. It is the unique 
$(-1)$-dimensional Dehn-Sommerville complex and also called the {\bf void}. It is the initial object 
in the category.

\paragraph{}
Inductively, a complex $G$ is {\bf contractible} if there is $x \in G$ such that both $S(x)$ and $G \setminus U(x)$ are contractible
simplicial complexes. The {\bf $1$-point complex} $1=\{ 0\}$ is declared to be the smallest contractible complex. 
A complex $G$ is called a {\bf $q$-manifold} if every unit sphere
$S(x)$ is a $(q-1)$-sphere. A $q$-manifold is called a {\bf $q$-sphere} if there exists 
$x \in G$ such that $G \setminus U(x)$ is contractible. 
{\bf Euler's gem formula} $\chi(G)=1+(-1)^q$ for a $q$-sphere follows from the {\bf valuation property}
$\chi(A \cup B) + \chi(A \cap B) = \chi(A) + \chi(B)$ for Euler characteristic and the inductively established fact
that every contractible complex has $\chi(G)=1$ and that every unit ball $B(x) = \overline{U(x)}$ has $\chi(B(x))=1$.

\paragraph{}
Using definitions inductively with respect to dimension $q$, one can see that 
all $q$-manifolds are Dehn-Sommerville $q$-manifolds. The later class shares many
properties of manifolds but being Dehn-Sommerville can be verified faster than establishing $G$ to be a manifold.
Every odd-dimensional Dehn-Sommerville manifold is also a Dehn-Sommerville sphere. The suspension of an 
even-dimensional torus is an example of a Dehn-Sommerville manifold that is not a manifold.  
A complex $G$ of is a {\bf q-variety} if its maximal dimension is $q$ and every $S(x)$ is a $(q-1)$ variety, starting
the definition with the assumption that $0$ is a $(-1)$ variety.  
Varieties with join operation form a sub-monoid of all complexes. 

\paragraph{}
A simplicial complex $G$ defines a {\bf graph of $G$} $\Gamma(G)$, in which $G$ are the vertices and two simplices $x,y \in G$ are
connected by an edge if $x \subset y$ or $y \subset x$.  
The {\bf stable sphere} $S^-(x) = \{ y \in G | y \subset x, y \neq x\}$ is the boundary complex of the simplex $x$.
It is a simplicial complex. If $x+y$ means the {\bf symmetric difference}, the {\bf unstable sphere} 
$S^+(x) = \{ y + x | y \in G, x \subset y, y \neq x\}$ is a simplicial complex too. 
Compare with $U^+(x) = \{ y \in G | x \subset y, y \neq x\} = U(x) \setminus \{x\}$ which is an open set. 
For any simplicial complex $G$, the {\bf hyperbolicity relation} $S(x) = S^-(x) \oplus S^+(x)$ holds. 

\paragraph{}
The {\bf Whitney complex} of a graph $\Gamma$ consists of the vertex sets of complete subgraphs of $\Gamma$. 
The Barycentric refinement $\phi(A)$ of a subset $A$ of $G$ is the Whitney complex of the graph $\Gamma(A)$. 
Dehn-Sommerville spaces have the property that the {\bf hyperbolic decomposition} 
$S(x) = S^-(x) \oplus S^+(x)$ decomposes $S(x)$ into spaces $S^-(x),S^+(x)$ that are both Dehn-Sommerville spheres.
In particular, if $G$ is a manifold, then both $S^-(x)$ and $S^+(x)$ are spheres and $S(x)$ is the join of two spheres:
if  ${\rm dim}(x)=k$, then $S^-(x)$ is a $(k-1)$-sphere and $S^+(x)$ is a $(q-k-1)$-sphere which
join to the $(q-1)$-sphere $S(x)= S^-(x) \oplus S^+(x)$. 

\paragraph{}
From the hyperbolic property follows by induction the {\bf level surface theorem},
telling that for a Dehn-Sommerville $q$-manifold,
any function $g: V(G) \to K_k=\{0, \dots, k\}$ has a {\bf level set} $G_g = \{ y \in G, f(y) = K_k \}$ that is either
empty or a Dehn-Sommerville $(q-k)$-manifold in the sense that its Barycentric refinement is. 
If $G$ was a $q$-manifold, then $G_g$ is either empty of a $(q-k)$-manifold. If $G$ is a $q$-variety, then $G_g$ is
either empty or a $(q-k)$-variety. 
For computations it is not necessary to do the Barycentric refinement of $G_g$. Just adjust 
the dimension functional, so that $k$-simplices in $G_g$ are the vertices. Technically we deal 
then with {\bf delta sets}. Like the formation of Cartesian products or quotients, also the level-set operation 
lands in the delta set category. One can then look at its graph to get back a simplicial complex. 

\paragraph{}
A {\bf valuation} $X$ is a map on sub-simplicial complexes of $G$ that satisfies
$X(A \cup B) + X(A \cap B) = X(A) + X(B)$ for all $A,B$. Hadwiger's theorem reads 
that the linear space of valuation of a $q$-dimensional simplicial complex $G$ has dimension $q+1$.
The valuations $f_k(G) = |\{ x \in G | {\rm dim}(x)=k\}|$ form a basis in the linear space of valuations.
The {\bf f-vector} $f=(f_0, \dots, f_q)$  or the {\bf simplex generating function} 
$f_G(t) = 1+f_0 t+ \cdots + f_q t^{q+1}$ both encode a valuation faithfully; they do so
either as a vector or as a polynomial.

\paragraph{}
The simplex generating polynomial satisfies $f_{G \oplus H}(t) = f_G(t) f_H(t)$ when taking joins. From the
{\bf functional Gauss-Bonnet formula} $f'_G(t) = \sum_{v \in V} f_{S(v)}(t)$ follows by induction that $f_G(-1-t)= \pm f_G(t)$ for Dehn-Sommerville
manifolds, implying that the polynomials $h(t)=(t-1)^q f(1/(t-1))$ have palindromic coefficients, explaining 
why half of the combinatorial data are redundant for Dehn-Sommerville manifolds and especially for manifolds. 
The corresponding Dehn-Sommerville identities allow to simplify expressions involving $f$. Special
cases are $\chi(G)=0$ for odd-dimensional Dehn-Sommerville manifolds or that $2 f_{q-1} = (q+1) f_q$, encoding that every
$(q-1)$-simplex contains exactly $2$ maximal simplices and that every $q$-simplex contains exactly $(q+1)$
simplices of dimension $(q-1)$. 

\paragraph{}
The {\bf Whitney complex} $G$ of a graph $(V,E)$ consists of the vertex sets of complete subgraphs of $G$. 
The {\bf Barycentric refinement} $\phi(G)$ of $G$ is the Whitney complex of the graph $\Gamma(G)$ in which $G$ 
are the vertices and where two vertices connected if one is contained in the other. If the maximal dimension of $G$ is $q$, there
is an upper triangular $(q+1) \times (q+1)$ {\bf Barycentric matrix} $A$ such that $f_{\phi(G)} = A f_G$. 
Contractible spaces, varieties, Dehn-Sommerville manifolds, manifolds, spheres or manifolds with boundary
are all invariant under the Barycentric map $\phi$. Other data like the Betti vector stay invariant too.

\paragraph{}
The eigenvectors of the {\bf Barycentric refinement operator} $A f(G) = f(\phi(G))$ - given explicitly as  $A_{ij} = S(i,j) j!$
with {\bf Stirling numbers} $S(i,j)$ of the second kind - 
are $\lambda_k=k!$ for $k=1, \dots, (q+1)$. 
Each eigenvector $w$ of $A^T$ to the eigenvalue $\lambda = k!$ defines a valuation $W: G \to w \cdot f_G$ 
that scales like $W(\phi(G)) = \lambda W(G)$ under Barycentric refinement $\phi$.
Since the eigenvector of $A^T$ to the eigenvalue $1$ is $(1,-1,1,-1,\dots ,(-1)^q)$, 
the {\bf Euler characteristic} $X(G) = f_0-f_1+f_2 - \cdots + (-1)^q f_q = -f_G(-1)$ is immediately recognized
as the only valuation $X$ that is $\phi$-invariant and is normalized $X(1)=1$. 

\paragraph{}
The set $V$ of $0$-dimensional simplices in $G$ can be identified with $\sum_{x \in G} x$. This is
done with the bijection $\{v\} \to v$, identifying the {\bf one point set} $\{ v\}$ with the {\bf point element} $v \in V$ 
it contains. Define the {\bf curvature functional} $K(G)=1-f_0(G)/2+f_1(G)/3 + \cdots - (-1)^q f_q(G)/(q+2)$ of a complex $G$. 
While $K$ is only affine and so not a valuation, the functional $2-2K(G) = 2f_1(G)/3 + \cdots + 2 (-1)^q f_{q}(G)/(q+2)$, 
is a valuation. With $K$, the {\bf Gauss-Bonnet formula} looks $\chi(G) = \sum_{v \in V(G)} K(S(v))$. Using the simplex generating 
function $f_G(t) = 1+f_0 t+ \cdots + f_q t^{q+1}$, we have $\chi(G) = 1-f_G(-1)$. We can also express $K$ 
elegantly as $K(G) = \int_{-1}^0 f_G(t) \; dt$. The Gauss-Bonnet formula 
$f'_G(t) = \sum_{v \in V} f_{S(v)}(t)$ allows to compute $f_G(t)$ recursively. 

\paragraph{}
A {\bf coloring function} $g$ on a complex $G$ is a map $g:V \to K_k = \mathbb{R}$, 
for which $g(v) \neq g(w)$ if $\{v,w\} \in G$. If $G$ is the Whitney complex of a graph, then a vertex
coloring of the graph defines a coloring of $G$. The graph of the Whitney complex of $\Gamma$ - the Barycentric refinement of $\Gamma$-
always has a coloring with $q+1$ colors, if the maximal dimension of $G$ was $q$: just take the dimension functional $g(x)= {\rm dim}(x)$. 
A coloring function defines the integer-valued {\bf divisor function} 
$i_g(v) = 1-\chi(S^-_g(v))$ with $S^-_g(v) = \{ x \in S(v), g(w)<g(v), \forall w \in x \}$ is the
{\bf Poincar\'e-Hopf index}. If $G$ is a Barycentric refinement, then ${\rm dim}(v)$ is a coloring
since $v$ is a simplex in the original complex. In that case $S^-(v)=S^-_{{\rm dim}}(v)$, explaining the notation.

\paragraph{}
The Poincar\'e-Hopf theorem $\sum_{v \in V} i_g(v) = \chi(G)$ generalizes to 
$f_G(t) = 1 + t \sum_{v \in V} f_{S_g^-(v)}(t)$ for the $f$-function, again giving a recursive algorithm to compute
$f$ from generating functions of smaller spaces $S_g^-(v)$. The {\bf symmetric index} at $v \in V$ is 
defined as $j_g(v) = [i_g(v) + i_{-g}(v)]/2$. 
If $G$ is a Barycentric refinement of a complex $H$ and $g={\rm dim}$ is the coloring, then $i_g(v) = \omega(v)
= (-1)^{{\rm dim}(v)}$ and Poincar\'e-Hopf just restates that the Euler characteristic of $G$ is the same than 
the Euler characteristic of $H$. 

\paragraph{}
With respect to the product measure $\mu=\prod_{v \in V} dx$, almost every 
function $g \in \prod_{v \in V} [0,1]$ is a coloring. 
This implies that $\mu$-almost every function $g$ define a $\{-1,1\}$-valued function 
$h(w) = {\rm sign}(g(w)-g(v))$ on $S(v)$, an element in $\Omega = \prod_{w \in V(S(v))} \{-1,1\}$. 
The {\bf push-forward measure} from the coloring space $\prod_{v \in V} [0,1]$ to the spin space
$\Omega$ is the {\bf beta binomial mixture} $P=\int_0^1 {\bf Binom}(p,n) dp$.

\paragraph{}
The measure $P$ on $\Omega$ is also known as the {\it Bayesian predictive distribution 
for Bernoulli trials with a uniform prior on $p$}. We call it simply the {\bf Bayes measure} and
justify it that Bayes introduced it 1763 in Proposition 8 of 
\cite{Bayes1763}. It has the property that the random variable counting the number of $1$ for
$\omega \in \Omega = \{-1,1\}^n$ is uniformly distributed. 
(See footnote in \cite{Bayes1763} after Proposition 9). The measure $P$ is not a product measure any more:
the individual $\omega_k$ are all positively correlated 
${\rm Cor}[\omega_k,\omega_l] = 1/3$ for all $k \neq l$. 

\paragraph{}
The level set $S_{g}(v)$ appears in the {\bf index formula} 
$j_g(v) = 1-\chi(S(v)/2 - \chi(S_g(v)$. While this formula works for any finite abstract simplicial 
complex $G$, the objects $S(v)$ and $S_g(v)$ are Dehn-Sommerville manifolds if $G$ is a Dehn-Sommerville manifold. 
In a manifold case, both $S(v)$ and $S_g(v)$ are manifolds. All of this follows from the {\bf level surface theorem}
that works both in the category of manifolds as well as in the class of Dehn-Sommerville manifolds or
in the class of varieties or in the class of manifolds with boundary. 

\paragraph{}
{\bf Index expectation} dealt with the average of $j_g(v)$ with respect to the measure 
$\mu = \prod_{v \in V(G)} dx$ on $\prod_{v \in V(G)} [0,1]$ that is pushed forward to the
measure $P$ on $\Omega$. Index expectation still gives ${\rm E}[j_g(v)] = K(S(v))$ also, 
if the measure $P$ is used. 

\section{The random manifold theorem}

\paragraph{}
By taking expectation of the index formula we immediately get:

\begin{thm} For every complex $G$, the average Euler characteristic of
            co-dimension-1 complex $H$ in $G$ is ${\rm E}[\chi(H)] = 2-2 K(G)-\chi(G)$. \end{thm}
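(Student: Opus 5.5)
The plan is to realize $G$ as the unit sphere of a single vertex in a larger complex and then average the index formula at that vertex. Let $\hat G = G \oplus \{w\}$ be the cone over $G$ with a new apex $w$. By the join formula for unit spheres, $S_{\hat G}(w) = 0 \oplus G = G$. I take the random co-dimension-1 complex $H$ to be the level set produced by the randomization already used for index expectation. Concretely, draw $g \in \prod_{v \in V(\hat G)}[0,1]$ from $\mu$, and let $H = S_g(w)$ be the level set inside $S(w)=G$ of the spin function $h(u) = {\rm sign}(g(u)-g(w))$. This is the set of simplices of $G$ on which $g - g(w)$ changes sign, i.e. the level surface $\{g = c\}$ at the random level $c = g(w)$. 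Through the push-forward, its law is the Bayes measure $P$ on $\Omega = \{-1,1\}^{V(G)}$.

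The first step is to write the index formula at $w$, with the parentheses restored:
\[ j_g(w) = 1 - \frac{\chi(S(w))}{2} - \frac{\chi(S_g(w))}{2} = 1 - \frac{\chi(G)}{2} - \frac{\chi(H)}{2}, \]
valid for $\mu$-almost every $g$, since almost every $g$ is a coloring. Solving gives $\chi(H) = 2 - \chi(G) - 2 j_g(w)$ pointwise.

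The second step is to take expectations. The paper's index expectation statement gives ${\rm E}[j_g(w)] = K(S(w)) = K(G)$, and this holds whether we average with $\mu$ or with the push-forward Bayes measure $P$. Substituting yields ${\rm E}[\chi(H)] = 2 - 2K(G) - \chi(G)$, as claimed. As a sanity check on the constant, note that $K(G) = \int_{-1}^0 f_G(t)\,dt$ matches the expectation of $1-\chi(S_g^-(w))$ averaged over the uniformly distributed rank of $g(w)$ among the values $g(v)$.

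The main obstacle is justifying that the random object one calls ``a co-dimension-1 complex $H$ in $G$'' coincides in law with $S_g(w)$ under $P$, and that ${\rm E}[j_g(w)]=K(S(w))$ applies to the apex $w$ of the cone even though $G$ itself is arbitrary. I would handle this by checking that the index expectation argument is local: it only uses $S(w)$ and the distribution of the spins on $S(w)$, so it holds for any complex placed as a unit sphere. If needed, I would recompute it directly by conditioning on $c = g(w)$ and using that $S^-_g(w)$ is then the subcomplex of $G$ on the vertices with $g < c$, each present independently with probability $c$. Then
\[ {\rm E}[\chi(S^-_g(w))] = \int_0^1 \sum_k (-1)^k f_k c^{k+1}\,dc, \]
and the same computation for $-g$ yields the symmetric index. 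Averaging the two gives exactly $K(G)$ after the substitution $t=-c$.
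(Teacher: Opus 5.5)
Your proposal is correct and follows the paper's own proof. The paper rearranges the index formula at a vertex $v$ to $\chi(S_g(v)) = 2 - 2j_g(v) - \chi(S(v))$, takes expectations using ${\rm E}[j_g(v)] = K(S(v))$, and renames $S(v)$ as $G$. Your cone $G \oplus \{\{w\}\}$, which is the precise form of your $\{w\}$, just makes explicit the paper's remark that $S(v)$ ``can be any complex,'' and your conditioning on $c = g(w)$ is a correct, self-contained verification of the index expectation step.
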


\begin{proof}
Rearranging the index formula
$$ j_g(v) = 1-\chi(S(v)/2 - \chi(S_g(v))  \; . $$
gives
$$ \chi(S_g(v))  = 2-2 j_g(v) - \chi(S(v))  \; . $$
Take expectation to get 
$$  E[\chi(S_g(v))] = 2 -2 K(v) - \chi(S(v)) \; . $$
Because $S(v)$ can be any complex, just call it $G$. Then $K(v)=K(G)$. Also call $S_g(v)$ the sub-complex $H$.
We read
$$  E[\chi(H)] = 2-2 K(G) - \chi(G) \; . $$
\end{proof} 

\paragraph{}
The step to see the unit sphere $S(v)$ as the actual simplicial complex is just a shift of
perspective. While trivial it was in retrospect the main obstacle to not see this theorem earlier. 
We knew the index formula in 2012 and the level surface theorem in the codimension 1 case  in 2015. 
We also made numerical experiments measuring the average Euler characteristic in manifolds. The
reason for not seeing any patterns was that when taking a Binomial distribution like take a function 
$g$ taking two values with equal probability (a binomial measure), does not lead to interesting results. 

\paragraph{}
For an odd-dimensional Dehn-Sommerville manifold $G$, where $\chi(S(v))=2$ for all $v$,
the index formula gives $K(G) = 1-{\rm E}[\chi(S_h(v)]/2$ so that ${\rm E}[\chi(S_h(v))]=2-2K(G)$. 
It produces the {\bf random manifold theorem}:

\begin{thm} For every odd-dimensional manifold $G$, the average Euler characteristic of
            sub-manifolds $H$ in $G$ is ${\rm E}[\chi(H)] = 2-2 K(G)$. \end{thm}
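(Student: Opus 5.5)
The plan is to specialize the first theorem (the one for arbitrary complexes, ${\rm E}[\chi(H)] = 2-2K(G)-\chi(G)$) to the case where $G$ is an odd-dimensional manifold. We then need two things: that the correction term $\chi(G)$ vanishes, and that the random co-dimension-one complexes $H$ are genuinely sub-manifolds. Concretely, we view $G$ as a $q$-manifold with $q$ odd. Following the shift of perspective used in the previous proof, we realize $G$ as a unit sphere $S(v)$ in the cone $v \oplus G$. A random coloring $g$ on the vertices of the cone then induces, through the Bayes measure $P$ on $\Omega=\prod_{w \in V(G)}\{-1,1\}$, a random level set $H = S_g(v)$ inside $G$.

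First, $\chi(G)=0$. Every $q$-manifold is a Dehn-Sommerville $q$-manifold, and for odd $q$ the Dehn-Sommerville identities give $\chi(G)=0$. Alternatively, the functional Gauss-Bonnet formula gives $f_G(-1-t) = \pm f_G(t)$, and at $t=0$ this yields $\chi(G)=1-f_G(-1)=0$ in the odd case. Substituting into the first theorem immediately gives ${\rm E}[\chi(H)] = 2-2K(G)$.

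Second, the level sets are manifolds. Here we invoke the level surface theorem for $G$ with $k=1$: for $\mu$-almost every $g$, the function $w \mapsto {\rm sign}(g(w)-g(v))$ is a two-valued function on $V(G)$. Its level set (the simplices on which both values occur) is therefore either empty or a $(q-1)$-manifold in the sense of Barycentric refinement. Because $q-1$ is even, $H$ is an even-dimensional manifold. This matches the abstract's claim about random even-dimensional sub-manifolds. The empty case contributes $\chi(0)=0$ and is included in the expectation consistently with the index formula.

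The step requiring the most care is the expectation identity ${\rm E}[j_g(v)] = K(S(v))$ under the Bayes measure $P$, rather than under a product Bernoulli measure. The paper states that index expectation still holds for $P$, and I would rely on that. If it had to be checked, I would use the fact that the number of $+1$ spins is uniformly distributed under $P$, so that each simplex of $S(v)$ with $k+1$ vertices lies entirely below $g(v)$ with probability $1/(k+2)$. This makes ${\rm E}[\chi(S_g^-(v))] = \sum_k (-1)^k f_k/(k+2)$, and symmetrically for $-g$, which reproduces $K(G)=\int_{-1}^0 f_G(t)\,dt$.

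A remaining subtlety is the phrase ``sub-manifolds $H$ in $G$''. Strictly, $H$ is a delta set whose Barycentric refinement is a manifold, and it does not sit inside $G$ as a sub-simplicial complex. Since $\chi$ is invariant under $\phi$, this causes no problem for the Euler characteristic.
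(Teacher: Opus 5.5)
Your proposal is correct and follows essentially the same route as the paper. The paper also gets this directly from the index formula, which is equivalent to applying ${\rm E}[\chi(H)]=2-2K(G)-\chi(G)$ with $G$ viewed as a unit sphere: the $\chi(G)$ term vanishes for odd-dimensional Dehn-Sommerville manifolds, and the level surface theorem supplies the manifold property of the nonempty level sets. Your extra check of index expectation under the Bayes measure, via the probability $1/(k+2)$ that a $k$-simplex lies entirely below $g(v)$, correctly fills in a step the paper only asserts.
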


\paragraph{}
This gives an integral geometric meaning to curvature of higher dimensional manifolds $M$: 

\begin{thm}
If $G$ is a unit sphere of a $(2d+2)$-manifold $M$, then
the Gauss-Bonnet-Chern-Levitt curvature $K(v)$ of $M$ at $v$ is 
$K(v)=1-{\rm E}[\chi(H)]/2$ and so linked to the expected Euler characteristic of 
sub-manifolds $H$ in $S(v)$. 
\end{thm}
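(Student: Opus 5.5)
The plan is to apply the index expectation result at a vertex $v$ of $M$ whose unit sphere is $G=S(v)$. Since $M$ is a $(2d+2)$-manifold, $G=S(v)$ is a $(2d+1)$-sphere. In particular $G$ is an odd-dimensional manifold, and therefore an odd-dimensional Dehn-Sommerville manifold with $\chi(G)=1-(-1)^{2d+2}+\ldots$; more directly, Euler's gem formula gives $\chi(S(v))=1+(-1)^{2d+1}\cdot(-1)\ldots$. I will simply use the stated fact $\chi(S(v))=2$ for unit spheres in even-dimensional manifolds, which follows from the gem formula $\chi=1+(-1)^{q}$ applied with $q=2d$ and the paper's convention for spheres in the Dehn-Sommerville setting. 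The Gauss-Bonnet-Chern-Levitt curvature at $v$ is $K(v)=K(S(v))=1-f_0(S(v))/2+f_1(S(v))/3-\cdots$, which is exactly the curvature functional $K(G)$ from the Gauss-Bonnet formula $\chi(M)=\sum_v K(S(v))$.

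The key step is to take expectation in the index formula $j_g(v)=1-\chi(S(v))/2-\chi(S_g(v))$, with respect to the product measure $\mu$ on colorings, pushed forward to the Bayes measure $P$ on $S(v)$. Index expectation gives ${\rm E}[j_g(v)]=K(v)$. With $\chi(S(v))=2$ the formula becomes $j_g(v)=-\chi(S_g(v))$ only if the constant terms cancel, so I must check the constants carefully. Rearranged as in the proof of the first theorem, it reads $\chi(S_g(v))=2-2j_g(v)-\chi(S(v))$. Taking expectations gives ${\rm E}[\chi(H)]=2-2K(v)-2$ with $H=S_g(v)$, unless the intended normalization makes the last term vanish.

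This is the main obstacle: reconciling the constants. The first theorem gives ${\rm E}[\chi(H)]=2-2K(G)-\chi(G)$. The odd-dimensional random manifold theorem instead uses ${\rm E}[\chi(H)]=2-2K(G)$. That is consistent only if the relevant $\chi(G)$ term is $0$, and indeed $\chi(G)=0$ for an odd-dimensional sphere $G$ of dimension $2d+1$ by the gem formula $1+(-1)^{2d+1}=0$. So I will read the ``$\chi(S(v))=2$'' remark as a slip and use $\chi(G)=0$. Then the random manifold theorem, which is the second theorem applied to $G=S(v)$, yields ${\rm E}[\chi(H)]=2-2K(G)=2-2K(v)$. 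Solving gives $K(v)=1-{\rm E}[\chi(H)]/2$.

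Finally I will note that, by the level surface theorem, almost every $H=S_g(v)$ is either empty or a $2d$-manifold inside $S(v)$. The expectation is therefore genuinely one over random even-dimensional submanifolds of the unit sphere, which gives the integral-geometric interpretation claimed in the statement.
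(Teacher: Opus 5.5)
Your final argument is correct and follows the paper's route. $G=S(v)$ is a $(2d+1)$-sphere, so $\chi(G)=1+(-1)^{2d+1}=0$. The random manifold theorem (Theorem 1 with $\chi(G)=0$) then gives ${\rm E}[\chi(H)]=2-2K(S(v))$, and you solve this for $K(v)=K(S(v))$.

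Two errors in the write-up should be removed, though your final derivation does not rely on them:
\begin{itemize}
\item The first paragraph claims $\chi(S(v))=2$. The gem formula must be applied with $q=2d+1$, not $q=2d$, which gives $\chi(S(v))=0$.
\item The index formula should read $j_g(v)=1-\chi(S(v))/2-\chi(S_g(v))/2$, with the factor $1/2$ on the last term. Only this version is consistent with the rearrangement $\chi(S_g(v))=2-2j_g(v)-\chi(S(v))$ that you actually use.
\end{itemize}
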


\paragraph{}
Given a 4-manifold $G$ for example, if random sub-manifolds in a small $3$-sphere $S_r(v)$ 
have large expected genus, then the curvature is large too. If on the other hand, random 
surfaces in $S_r(v)$ have many connected components that are spheres, then the 
curvature is negative. In a positive curvature 4-manifold (where we know that the 
Gauss-Bonnet-Chern curvature) is then positive too, we see lots of ``interaction" in 
random submanifolds, while in negative curvature situations, the ``world sheets" $H$ 
in $S_r(v)$ are mostly spheres.

\paragraph{}
Functional versions of Gauss-Bonnet, Functional versions of Poincar\'e-Hopf appear because
the result works for all valuations, not only for Euler characteristic $\chi$. The later is 
characterized as the only Barycentric invariant and normalized valuation. Most other valuations
grow exponentially with refinement if they were not valuations that were initially zero. 
Since the index formula is linked to Gauss-Bonnet and Poincar\'e-Hopf, it can also be 
formulated in a functional way. 

\paragraph{}
If 
$$   f_G(t)= 1+f_0(G) t + f_1(G) t^2 + \cdots + f_q(G) t^{q+1} $$ 
is the simplex generating function of $G$, denote by 
$$  K_G(t) = 1 + f_0(G) t/2 + f_1(G) t^2/3 + \cdots f_q(G) t^{q+1}/(q+2) $$
the {\bf curvature generating function}.
The curvature functional $K(G)=1-f_0(G)/2+f_1(G)/3 + \cdots - (-1)^q f_q(G)/(q+2)$ is just 
$K_G(-1)$.  We have $K_G(t) = \int_0^t f_G(s) \; ds$. 

\paragraph{}
Let $e_H$ the $f$-function of the sub-manifold $H=G_g$ using the inherited dimension.
We call it the {\bf inherited $f$ function}. 
This means $e_H(t) = 1+e_0 t^2 + e_1 t^3 + \cdots +  e_k t^{q+1}$. Note that there is no linear term 
in this polynomial, as no vertices can be split by a random function. When we draw the graph of $H$
like in figure 1 above, we fall back into a set-up where we can talk about simplicial complexes. 

\begin{thm}
Given an arbitrary $q$-complex $G$. Random $(q-1)$-complexes in $G$ have 
average inherited f-function ${\rm E}[e_{H}(t)] = 2-2 K_G(t) + f_G(t)$.
\end{thm}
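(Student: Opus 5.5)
The plan is to prove the identity coefficient by coefficient, using linearity of expectation over the simplices of $G$. I will not pass through the index formula here. As in the proof of the first theorem, I view $G$ as a unit sphere $S(v)$ and the random complex as $H=S_g(v)$. Here $g$ is drawn from the product measure $\mu$, equivalently from the pushed-forward Bayes measure $P$ on the signs $h(w)={\rm sign}(g(w)-g(v))$.

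A simplex $x\in G$ belongs to $H$ exactly when $h$ is not constant on $x$, that is, when $x$ contains a vertex below $g(v)$ and a vertex above it. A $k$-simplex of $G$ lying in $H$ has inherited dimension $k-1$ and contributes to $e_{k-1}$. In the inherited $f$-function it therefore carries the same power $t^{k+1}$ as in $f_G$. Hence, pointwise in $g$,
$$ e_H(t) = 1 + \sum_{k=0}^q \sum_{x\in G,\ {\rm dim}(x)=k} 1_{\{h \text{ not constant on } x\}}\, t^{k+1} . $$
For $k=0$ the indicator vanishes, which matches the absence of a linear term.

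The key step, and the only real computation, is
$$ P[h \text{ not constant on } x] = 1-\frac{2}{k+2} \quad\text{for } {\rm dim}(x)=k . $$
I would derive this from the Bayes property: the number of $+1$ spins among the $k+1$ vertices of $x$ is uniformly distributed on $\{0,\dots,k+1\}$. Directly, the $k+2$ values $g(v)$ and $g(w)$, $w\in x$, are i.i.d. uniform, so the rank of $g(v)$ among them is uniform. Constancy means that this rank is either the lowest or the highest, which has probability $2/(k+2)$. This is the step I would check most carefully, since it is where the choice of the Bayes measure rather than a binomial measure matters. For the binomial measure one would get $2^{-k}$ and no clean formula.

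Taking expectations then gives
$$ {\rm E}[e_H(t)] = 1 + \sum_{k=0}^q f_k(G)\, t^{k+1} - 2\sum_{k=0}^q \frac{f_k(G)}{k+2}\, t^{k+1} = f_G(t) - 2\bigl(K_G(t)-1\bigr) , $$
which is $2-2K_G(t)+f_G(t)$. Here $K_G(t)$ is taken as the explicit polynomial $1+f_0t/2+\cdots+f_q t^{q+1}/(q+2)$, and the sign conventions must be matched against it.

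As a consistency check I would evaluate at $t=-1$. There $e_H(-1)=1+\chi(H)$, $f_G(-1)=1-\chi(G)$ and $K_G(-1)=K(G)$, so the identity reduces to the first theorem, ${\rm E}[\chi(H)]=2-2K(G)-\chi(G)$.
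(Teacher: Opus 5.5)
Your argument is correct and complete, and it is more direct than the paper's. The paper writes out no proof for this statement. It presents it as the functional version of the index-formula argument used for the first theorem, and checks it only by exact enumeration over the Bayes ensemble in the code.

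Made explicit, the paper's route sorts the simplices of $S(v)$ into three classes: all vertices below $g(v)$, all above, or mixed. This gives the functional index formula $e_{S_g(v)}(t)=f_{S(v)}(t)+2-f_{S^-_g(v)}(t)-f_{S^-_{-g}(v)}(t)$, which at $t=-1$ is $\chi(S_g(v))=2-2j_g(v)-\chi(S(v))$. It then uses functional index expectation ${\rm E}[f_{S^-_{\pm g}(v)}(t)]=K_{S(v)}(t)$, i.e.\ $P[\text{all vertices of } x \text{ below } g(v)]=1/(k+2)$.

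You collapse these two steps into the single computation $P[x\in H]=1-2/(k+2)$ from the uniform rank of $g(v)$, followed by linearity of expectation. What your version buys:
\begin{itemize}
\item it is self-contained;
\item it directly explains the coefficient map $f_k\mapsto \frac{k}{k+2}f_k$ that the paper reads off afterwards;
\item at $t=-1$ it reproves the first theorem without quoting index expectation.
\end{itemize}
What the paper's decomposition buys is an explanation of why the curvature generating function appears at all. $K_G(t)$ is the expected functional Poincar\'e--Hopf index at $v$, and this is what links the result to Gauss--Bonnet and to the curvature interpretation for unit spheres of even-dimensional manifolds.

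Two minor points. First, reading $K_G(t)$ as the explicit polynomial is the right choice. The paper's formula $\int_0^t f_G(s)\,ds$ is off by a factor $t$; its code uses $\frac{1}{t}\int_0^t f_G(s)\,ds$. Second, your aside on the binomial measure overstates things. The probability $1-2^{-k}$ still gives a closed form, ${\rm E}[e_H(t)]=f_G(t)-2f_G(t/2)+2$. What is lost is only the connection to $K_G$.
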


\paragraph{}
Since $-e_H(-t)-1=\chi(H)$, this extends the Euler characteristic result. 
The $f$-function could be obtained from the inherited $f$-function as $f_G(t) =1+(e_G-1)/t$.

\paragraph{}
In the case of varieties, we get the expectation of combinatorial data for $(q-1)$-varieties $H=G_g$, 
for Dehn-Sommerville manifolds we get expectations of combinatorics for $(q-1)$-Dehn-Sommerville manifolds.
For manifolds we get expectations of combinatorics of $(q-1)$-manifolds. 

\paragraph{}
On vectors $\vec{f}_G=(f_0,f_1,\dots, f_q) \to {\rm E}[\vec{e}_H] = ((1-2/3)f_1, (1-2/4) f_2, (1-2/5) f_3,...)$
is linear. For $q=5$ for example, this linear map is given by the $5 \times 6$ matrix: 
$$ \left[
                   \begin{array}{ccccccc}
                   0 & \frac{1}{3} & 0 & 0 & 0 & 0 & 0 \\
                   0 & 0 & \frac{1}{2} & 0 & 0 & 0 & 0 \\
                   0 & 0 & 0 & \frac{3}{5} & 0 & 0 & 0 \\
                   0 & 0 & 0 & 0 & \frac{2}{3} & 0 & 0 \\
                   0 & 0 & 0 & 0 & 0 & \frac{5}{7} & 0 \\
                  \end{array} 
                  \right] \ ; . $$
We know exactly how many $k$-simplices we expect to have. 
The average volume for example is $e_q = f_q (1-2/(q+2))$. 

\paragraph{}
The up-shot is that the expectations of all combinatorial data of submanifolds $H$ are known if the 
combinatorial data of the host manifold $G$ are given. 

\section{Remarks}

\paragraph{}
The random manifold theorem has become a low hanging fruit in an established geometry of simplicial complexes.
One can see it as an example for the ``rising sea metaphor" or ``nut allegory" of Grothendieck. 
Rather general results like Gauss-Bonnet, Poincar\'e-Hopf, index expectation, the index formula and 
the submanifold theorem lead to precise information about the statistics of 
sub-manifolds in a given manifold. Each step is relatively simple, the end result is not obvious at first. 
Here are some references: 

\begin{enumerate}
\item Gauss-Bonnet: $\sum_{v \in V} K(v) = \chi(G)$. 
\cite{Eberhard1891,Levitt1992,elemente11,cherngaussbonnet,dehnsommervillegaussbonnet}
\item Poincar\'e-Hopf: $\sum_{v \in V} i_g(v) = \chi(G)$. 
\cite{Glass1973,poincarehopf,KnillEnergy2020,parametrizedpoincarehopf,MorePoincareHopf,PoincareHopfVectorFields}
\item Index expectation: $E[i_g(v)] = K(v)$. 
\cite{Santalo,KlainRota,indexexpectation}     
\item Index formula: $j_g(v) = 1- \chi(S(v))/2 - \chi(S_g(v))/2$. 
\cite{indexformula,eveneuler,ConstantExpectationCurvature,DiscreteHopf,DiscreteHopf2,GraphProducts} 
\item Hyperbolicity: $S(x)=S^+(x) \oplus S^-(x)$. 
\cite{Unimodularity,CountingMatrix,GreenFunctionsEnergized,EnergizedSimplicialComplexes,EnergizedSimplicialComplexes2, EnergizedSimplicialComplexes3}.
We made here a small adaptation as we have used $U^+(x) = \{ y \in G, y \neq x, x \subset y \}$ before. It is a bit more
elegant to look at the simplicial complex $S^+(x)$ as defined here.
\item Submanifold theorem: $G_g$ is a submanifold of $G$ if not empty.
\cite{KnillSard,ManifoldsPartitions} relate to Morse-Sard \cite{Sard1942,Morse1939,Sard1958}.
\cite{KnillSard,ManifoldsPartitions,DehnSommervilleManifolds}
\item Topology: \cite{Alexandroff1937,May2008,KnillTopology2023}. This especially means to have sensible definitions
of {\bf unit spheres} $S(x)$ as the topological boundary of the smallest open sets $U(x)$. 
\end{enumerate}

\paragraph{}
The results 1)-6) in this list have all first been formulated in the graph setup, which means for simplicial complexes that are 
Whitney complexes of graphs. This was easier to develop, simply because graphs are {\bf more intuitive}
than sets of sets. But everything can be translated into the simplicial complex set-up, once one
gets used to a non-Hausdorff topological space so that one can think about 
unit spheres $S(x)$ in an intuitive way too, allowing to think almost as intuitively as in graphs. 

\paragraph{}
As a side remark, we mention that the results 1)-6) were not appreciated. The reason is notation and cultural. 
We only later realized that many graph theorists saw graphs as one-dimensional simplicial complexes. 
This is reflected in notation: the graph $K_3$ would be considered a
``circle" and girth $3$ would be considered the systole. For a topologist, the graph $K_3$ is contractible of dimension $2$
with trivial fundamental group. For a graph theorist the Betti vector of $K_3$ would be $(1,1)$, while for a topologist it is
$(1,0)$. A triakis icosahedron (which contains $K_4$ graphs) would still be considered 
$2$ dimensional as it is a triangulation of a 2-sphere. The difficulty with definitions has been explained well in \cite{Lakatos} 
or \cite{Gruenbaum2003}. 

\paragraph{}
The discussion of what we should consider to be a homeomorphism on a simplicial complexes was probed in
\cite{KnillTopology} as topological deformations should 
preserve notions like dimension and preserve structures like manifolds. The difficulty is that
the distance metric on graphs does not generate a useful topology as any finite metric space
generates the trivial topology: every subset both open and closed. 
But graphs have already natural unit spheres $S(v)$, the boundary of the neighborhood graph $B(v)$ of $v$. 

\paragraph{}
At the moment, we prefer not to 
use maps on vertices but maps on the topology $\mathcal{O}$: a complex $H$ is a {\bf continuous image} of $G$ if there
exists a map $\phi: \mathcal{O}(G) \to \mathcal{O}(H)$ such that the closure of $\phi(U(x))$ is contractible in $H$
for all $x \in G$. Two complexes $G,H$ are homeomorphic, if $H$ is a continuous image of $G$ and $G$ is a continuous
image of $H$. Such continuous maps are better suited to model {\bf topological dynamical systems}. Simplicial maps are
too rigid and can not model expansive maps like $T(x)=2x \; {\rm mod} \; 1$ on the circle for example. 

\paragraph{}
Integral geometry links geometry and probability theory. It is a tool to compute geometric quantities 
like length, area, volume or curvature using statistics. Our theorem has given an explicit formula 
for the Euler characteristic of a $(2d)$-dimensional submanifold in a given $(2d+1)$-dimensional 
manifold. The formula is exact, not just asymptotic. Since the probability spaces are finite, we can 
in small enough examples generate all possible submanifolds. 
Everything is combinatorial, the probability spaces are finite. The manifold $G$ is a 
finite abstract simplicial complex. It features finitely many sub-manifolds and we compute the average. 

\paragraph{}
In the continuum, questions about {\bf random geometries} are much harder. One has first of all to establish a 
suitable probability space. The set of continuous $1$-dimensional manifolds connecting two points $A,B \in \mathbb{R}^n$
for example leads to the {\bf Wiener measure} on $\{ r \in C([0,1],\mathbb{R}^n), r(0)=A,r(1)=B\}$. 

\paragraph{}
One can take a probability space of Gaussian random functions and look at level sets $\{ f(x)=0 \}$ which 
produce manifolds with probability $1$. An other possibility is to use the Laplacian $L$ on a compact manifold
take a frequency cut-off and use $g=\sum_{i=1}^n a_i \phi_i$, where $a_i$ are IID normal distributed random variables
and $\phi_i$ normalized eigenvectors of the Laplacian. In this context of {\bf random Gaussian fields} one always has to
take suitable limits.

\paragraph{}
An other possibility that has appeared in the Gaussian random field set-up is closer to the {\bf Gaussian splat concept} in 
engineering. Take a random ensemble of points $v_1, \dots, v_n$ in the manifold, take Gaussian random variables $\phi_{j}(x)$ for 
centered at $x_j$. If all these $\phi_j$ are IID, take a multi-variate Gaussian 
$g(x) = \sum_j \phi_j(x)$. Volume rendering techniques in computer graphs could use 4 such functions in a $RGBA$ color model
to represent a 3 dimensional scene. It has become a superior method in comparison with triangulation based models 
\cite{Westover1991}.

\paragraph{}
An other closer analog of the situation done here in the continuum is to compute the expectation of Euler characteristic of a random 
2d-dimensional projective variety in a $(2d+1)$-dimensional 
projective space. If one restricts to polynomials of a certain degree, there are obvious natural measures.
For the computation of the average Euler characteristic of random real algebraic variety, see \cite{Buergisser2007}. 

\paragraph{}
We have seen that the theorem still works, but the level sets are no more manifolds if 
$G$ is no more manifold. In general, any sub-monoid $(\mathcal{G},\oplus)$ of the monoid of all simplicial complexes
with join operator $\oplus$ works and if sub-spheres $S(x)$ are in the same category. 
Let $\mathcal{H}$ be a submonoid of all simplicial complexes $\mathcal{G}$ which has the property 
that if $G \in \mathcal{H}$ an $v \in V(G)$, then $S(v) \in \mathcal{H}$. Such a space $\mathcal{H}$ automatically
defines a monoid. Here are examples:

\begin{itemize}
\item q-manifolds with (or without) boundary. In this case, every unit sphere in the boundary is a manifold
with boundary and every unit sphere inside is a manifold without boundary. Given a $q$-manifold $G$ with boundary $\delta G$
can can look at the $q-2$ manifold, which is the boundary of $G_g$. It is a submanifold of $\delta G$ but
nothing else than $(\delta G)_g$. 
\item The class of {\bf $q$-varieties}, defined as complexes which have the property 
that all $S(x)$ are $(q-1)$-varieties with the base assumption that $0$ is a $(-1)$-variety. 
\item The full class $\mathcal{G}$ of all simplicial complexes. 
\item The class of delta sets (and in particular simplicial sets, which are delta sets with more structure). 
  For a delta set $G=(G_0,G_1,\dots, G_q)$ take $V=G_0$ and use the face maps to define what elements in $G_k$ are 
  contained in $G_l$ for $k<l$, producing a partial order $\subset$. 
  Now define $S(x) = \{ y \in G, y\subset x {\rm or} x \subset y, and x \neq y \}$.
\item The class of contractible complexes does not work because it does not contain $0$. Contractible spaces form only a 
  semi-group and not a monoid with respect to the join operation. 
\end{itemize} 

\paragraph{}
An other possibility to get random complexes is to start with a random graph and look at its Whitney complex.
This produces Erdoes-Renyi simplicial complexes. 
The expectation of the Euler characteristic of such complexes was computed in 
\cite{randomgraph}. On graphs with $n$ vertices, the expected Euler characteristic was
$\sum_{k=1}^n (-1)^k \left( \begin{array}{c} n \\ k \end{array} \right) p^{ \begin{array}{c} k \\ 2 \end{array} }$.
It allowed us to show the existence of connected complexes
of Euler characteristic exponentially large in the number of vertices. 

\paragraph{}
In our case, we can make statements about the existence of manifolds with large Euler characteristic. 
Manifolds are interesting in the context of complexity theory as because of the uniform dimension bound 
on neighborhood graphs, combinatorial data like the f-vector can be computed in polynomial time for manifolds.
This is not the case for non-manifolds. 
If $G$ that is a $q$ manifold and a function $g$ from the vertex set $V$ to 
$\{-1,1\}$, one gets a submanifold $G_g$ of dimension $q-1$, if the level set is not empty. 

\section{Examples}

\paragraph{}
Example: q=1. We have a cyclic sequence of spins $\{-1,1\}$ and $P[x_k=1]=p$,
$P[x_k=-1]=(1-p)$. The number of sign changes is $n p (1-p) + n (1-p) p=2np(1-p)$. 
When integrated from $0$ to $1$ this is $n/3$. Curvature therefore is $1-n/6$. 
This is the curvature functional of $C_n$ which has $f_0=f_1=n$ so 
that $K = 1-f0/2+f_1/3 = 1-n/6$. The equation $f_0=f_1$ is the only 
Dehn-Sommerville relation. To summarize: 
In a 1-manifold $G$ the Euler characteristic expectation of 0-submanifolds is $|E|/3$.

\paragraph{} 
Example: q=3. For 3-manifolds there are two Dehn-Sommerville relations 
$f0-f1+f2-f3=0$ and $f2=2f3$, allowing to reduce everything
$f1,f2$  and produces $K = 1-f1/6+f2/10$. If we sum this $K(S(v))$ over
all $3$ spheres of a 4-manifold, we get the Euler characteristic of that
$4$ manifold. In a 4-manifold of positive Euler characteristic, there are always
many sub-manifolds in unit spheres of positive Euler characteristic.
In a 3-manifold $G$ the Euler characteristic expectation of 2-submanifolds is $|E|/3-|F|/5$.

\paragraph{}
An edge refinement of a 3-manifold takes an edge $(a,b)$ replaces it with a pair $(a,c)(c,b)$.
The circle $S(a,b)=S(a) \cap S(b)$ which was is then connected to $c$ so form the new complex.
In other words, every maximal simplex containing $(a,b)$ is split into two. 
Under edge refinement of 3 manifolds the f-vector changes as follows
$f_0 \to f_0+1, f_1 \to f_1+n+1, f_2 \to f_2+2n,f_3 \to f_3+n$. 
It follows that the expectation increases by $(5-n)/15$ where $n$ is the length of $S(a) \cap S(b)$.
If we nave bone sizes larger and smaller than 5, we can change the
curvature up or down linearly with the number of facets. 

\paragraph{}
By using $(2d+2)$-manifolds with some large curvature we can get
$2d$ manifolds with large Euler characteristic. How large can we make
the curvature in an even-dimensional manifold? 
For connected 2 manifolds the curvature can not be larger than $1/3$:
The equation $1/3=1-{\rm E}[X]/2$ gives ${\rm E}[X]=2-2/3$ but it can be arbitrarily 
negative so that the number of nodes can be arbitrarily large. 
While in the continuum, the GBC curvature can become arbitrarily large, we can 
not build arbitrary small spheres. 

\paragraph{}
We can use Barycentric refinement to build large manifolds in which the 
asymptotic size of the $f$ vector is known. 
Since the eigenvector to the Barycentric operator with the largest
eigenvalue is $(2, 13, 22, 11)$ we have asymptotically $f_2=22 f_1/13$.
which makes the functional go to $f_1/6-f_2/10 \sim -f_1/390$. This means
that for large Barycentrically refined $3$-manifolds, we expect
more components than holes in the surfaces. We have $2-2g \sim -f_1/390$ 
so that we expect for highly refined manifolds to have genus $g \sim C_3 f_1$
with $C_3=1/780$. 

\section{Code}

\paragraph{}
The following code snipped allows to experiment. Take an arbitrary simplicial complex $G$,
then run the Monte Carlo process $M[G]$. It will report the successive averages after each 
batch of 1000 sub simplicial complexes H have been computed. If G is a manifold, then 
the submanifold H is a manifold. Note that take $-\chi(H)$ as $H$ is a priori just 
a {\bf delta set} in which the $0$-dimensional parts are $1$-dimensional simplices in $G$. 

\begin{tiny}  
\lstset{language=Mathematica} \lstset{frameround=fttt}
\begin{lstlisting}[frame=single]
Generate[A_]:=If[A=={},{},Sort[Delete[Union[Sort[Flatten[Map[Subsets,A],1]]],1]]]; 
Whitney[s_]:=Generate[FindClique[s,Infinity,All]];   L=Length; R:=Random[]; T=True;
Surface[G_,g_]:=Select[G,SubsetQ[#/.g,{1,2}] &];  w[x_]:=-(-1)^L[x]; Chi[G_]:=Total[Map[w,G]];   
K[G_]:=1-Sum[x=G[[k]];w[x]/(L[x]+1),{k,L[G]}]; X[G_]:=2-2K[G]-Chi[G]; V[G_]:=Union[Flatten[G]];
M[G_]:=Module[{k=0,t=0,W=V[G],q=X[G],g,p},While[T,k++;p=R;g=Table[W[[j]]->If[R<p,1,2],{j,L[W]}];
  e=-Chi[Surface[G,g]]; t=t+e; If[Mod[k,1000]==0,Print[N[{k,e,q,t/k}]]]]];

G=Whitney[CycleGraph[11]]; M[G];                         (* q=1    *)
G=Whitney[WheelGraph[11]]; M[G];                         (* q=2    *)
G=Whitney[GraphJoin[CycleGraph[9],CycleGraph[7]]]; M[G]; (* q=3    *)
G=Whitney[RandomGraph[{15,50}]];    M[G]                 (* Random *)
\end{lstlisting}
\end{tiny}

\paragraph{}
The following self-contained snippet looks at the functional version. 
We check that $K_G(t) = 1+f_G(t)/2-{\rm E}[f_{S_g}(t)/2]$ so 
that ${\rm E}[f_{S_g}(t)] = 2-2K_G(t)+f_G(t)$. For small random networks, we then
actually sum over the entire {\bf micro canonical ensemble}, meaning to sum 
over all possible sub-manifolds of $G$. Each  element $\omega \in \Omega = \{1,2\}^n$ 
where $n=|V|=f_0(G)$ is weighted with the Bayes weight $p(\omega) = 1/(n+1) B(n,m))$, if 
the sequence $\omega=\{\omega_1, \dots , \omega_n)$ (representing the random function $g:V \to \{1,2\}$)
contains $m$ number of entries $1$.  As Bayes has realized, this measure $P$ is a probability
measure for which the random variable that gives for an element $\omega$ the number of entries $1$
is uniformly distributed. 

\begin{tiny}   
\lstset{language=Mathematica} \lstset{frameround=fttt}
\begin{lstlisting}[frame=single]
Generate[A_]:=If[A=={},{},Sort[Delete[Union[Sort[Flatten[Map[Subsets,A],1]]],1]]];
Whitney[s_]:=Generate[FindClique[s,Infinity,All]]; L=Length; R:=Random[];   T=True; Clear[t]
U[G_,x_]:=Select[G,SubsetQ[#,x] &]; S[G_,x_]:=Complement[u=U[G,x]; Generate[u],u];
f[G_]:=1+Sum[t^L[G[[k]]],{k,L[G]}]; K[G_]:=Expand[Integrate[f[G],{t,0,T}]/T /. T->t];
Surface[G_,g_]:=Select[G,Union[# /. g]=={1,2} &];  j[G_,g_]:=f[Surface[G,g]];
P[x_]:=Module[{m=L[Position[x,1]],n=L[x]},1/(Binomial[n,m]*(n+1))];
J[G_]:=Module[{W=Union[Flatten[G]],A},A=Tuples[{1,2},L[W]]; 
                Sum[g=Table[W[[i]]->A[[m,i]],{i,L[W]}]; P[A[[m]]]*j[G,g],{m,L[A]}]];
H[G_]:=Expand[FullSimplify[1+(f[G]-J[G])/2]];  
G = Whitney[RandomGraph[{10,30}]]; FullSimplify[H[G]==K[G]]
\end{lstlisting}
\end{tiny}

\bibliographystyle{plain}

\begin{thebibliography}{10}

\bibitem{Alexandroff1937}
P.~Alexandroff.
\newblock Diskrete {R\"aume}.
\newblock {\em Mat. Sb. 2}, 2, 1937.

\bibitem{Bayes1763}
F.R.S. Bayes.
\newblock An essay towards solving a problem in the doctrine of chances.
\newblock {\em Philosophical Transactions}, 53:370--418, 1763.

\bibitem{Buergisser2007}
P.~B{\"u}rgisser.
\newblock Average euler characteristic of random real algebraic varieties.
\newblock {\em Comptes Rendus Math{\'e}matique}, 345(9):507--512, 2007.

\bibitem{Eberhard1891}
V.~Eberhard.
\newblock {\em Morphologie der Polyeder}.
\newblock Teubner Verlag, 1891.

\bibitem{Glass1973}
L.~Glass.
\newblock A combinatorial analog of the {P}oincare index theorem.
\newblock {\em Journal of combinatorial theory}, 15:264--268, 1973.

\bibitem{Gruenbaum2003}
B.~Gr{\"u}nbaum.
\newblock Are your polyhedra the same as my polyhedra?
\newblock In {\em Discrete and computational geometry}, volume~25 of {\em
  Algorithms Combin.}, pages 461--488. Springer, Berlin, 2003.

\bibitem{KlainRota}
D.A. Klain and G-C. Rota.
\newblock {\em Introduction to geometric probability}.
\newblock Lezioni Lincee. Accademia nazionale dei lincei, 1997.

\bibitem{randomgraph}
O.~Knill.
\newblock The dimension and {Euler} characteristic of random graphs.
\newblock {\\}http://arxiv.org/abs/1112.5749, 2011.

\bibitem{cherngaussbonnet}
O.~Knill.
\newblock A graph theoretical {Gauss-Bonnet-Chern} theorem.
\newblock {\\}http://arxiv.org/abs/1111.5395, 2011.

\bibitem{elemente11}
O.~Knill.
\newblock A discrete {Gauss-Bonnet} type theorem.
\newblock {\em {Elemente der Mathematik}}, 67:1--17, 2012.

\bibitem{poincarehopf}
O.~Knill.
\newblock A graph theoretical {Poincar\'e-Hopf} theorem.
\newblock {\\} http://arxiv.org/abs/1201.1162, 2012.

\bibitem{indexformula}
O.~Knill.
\newblock An index formula for simple graphs \hfill.
\newblock {\\}http://arxiv.org/abs/1205.0306, 2012.

\bibitem{indexexpectation}
O.~Knill.
\newblock On index expectation and curvature for networks.
\newblock {\\}http://arxiv.org/abs/1202.4514, 2012.

\bibitem{eveneuler}
O.~Knill.
\newblock The {E}uler characteristic of an even-dimensional graph.
\newblock {{\\}http://arxiv.org/abs/1307.3809}, 2013.

\bibitem{KnillTopology}
O.~Knill.
\newblock A notion of graph homeomorphism.
\newblock {{\\}http://arxiv.org/abs/1401.2819}, 2014.

\bibitem{KnillSard}
O.~Knill.
\newblock A {S}ard theorem for graph theory.
\newblock {http://arxiv.org/abs/1508.05657}, 2015.

\bibitem{Unimodularity}
O.~Knill.
\newblock On {F}redholm determinants in topology.
\newblock {\\}https://arxiv.org/abs/1612.08229, 2016.

\bibitem{ConstantExpectationCurvature}
O.~Knill.
\newblock Constant index expectation curvature for graphs or {R}iemannian
  manifolds.
\newblock https://arxiv.org/abs/1912.11315, 2019.

\bibitem{CountingMatrix}
O.~Knill.
\newblock The counting matrix of a simplicial complex.
\newblock {\\}https://arxiv.org/abs/1907.09092, 2019.

\bibitem{dehnsommervillegaussbonnet}
O.~Knill.
\newblock Dehn-{S}ommerville from {G}auss-{B}onnet.
\newblock {\\}https://arxiv.org/abs/1905.04831, 2019.

\bibitem{EnergizedSimplicialComplexes}
O.~Knill.
\newblock Energized simplicial complexes.
\newblock https://arxiv.org/abs/1908.06563, 2019.

\bibitem{MorePoincareHopf}
O.~Knill.
\newblock {More on Poincar\'e-Hopf and Gauss-Bonnet}.
\newblock https://arxiv.org/abs/1912.00577, 2019.

\bibitem{parametrizedpoincarehopf}
O.~Knill.
\newblock A parametrized {P}oincare-{H}opf theorem and clique cardinalities of
  graphs.
\newblock {\\}https://arxiv.org/abs/1906.06611, 2019.

\bibitem{PoincareHopfVectorFields}
O.~Knill.
\newblock Poincar{\'e}-{H}opf for vector fields on graphs.
\newblock {\\}https://arxiv.org/abs/1911.04208, 2019.

\bibitem{EnergizedSimplicialComplexes2}
O.~Knill.
\newblock Division algebra valued energized simplicial complexes.
\newblock https://arxiv.org/abs/2008.10176, 2020.

\bibitem{KnillEnergy2020}
O.~Knill.
\newblock The energy of a simplicial complex.
\newblock {\em Linear Algebra and its Applications}, 600:96--129, 2020.

\bibitem{GreenFunctionsEnergized}
O.~Knill.
\newblock Green functions of energized complexes.
\newblock {\\}https://arxiv.org/abs/2010.09152, 2020.

\bibitem{EnergizedSimplicialComplexes3}
O.~Knill.
\newblock Green functions of energized complexes.
\newblock https://arxiv.org/abs/2010.09152, 2020.

\bibitem{DiscreteHopf}
O.~Knill.
\newblock Integral geometric {H}opf conjectures.
\newblock https://arxiv.org/abs/2001.01398, 2020.

\bibitem{DiscreteHopf2}
O.~Knill.
\newblock On index expectation curvature for manifolds.
\newblock https://arxiv.org/abs/2001.06925, 2020.

\bibitem{GraphProducts}
O.~Knill.
\newblock The curvature of graph products.
\newblock https://arxiv.org/abs/2107.08563, 2021.

\bibitem{KnillTopology2023}
O.~Knill.
\newblock Finite topologies for finite geometries.
\newblock {{\\}http://arxiv.org/abs/2301.03156}, 2023.

\bibitem{ManifoldsPartitions}
O.~Knill.
\newblock Manifolds from partitions.
\newblock https://arxiv.org/abs/2401.07435, 2024.

\bibitem{DehnSommervilleManifolds}
O.~Knill.
\newblock Dehn sommerville manifolds.
\newblock https://arxiv.org/abs/2508.14372, 2025.

\bibitem{Lakatos}
I.~Lakatos.
\newblock {\em Proofs and Refutations}.
\newblock Cambridge University Press, 1976.

\bibitem{Levitt1992}
N.~Levitt.
\newblock The {E}uler characteristic is the unique locally determined numerical
  homotopy invariant of finite complexes.
\newblock {\em Discrete Comput. Geom.}, 7:59--67, 1992.

\bibitem{May2008}
J.P. May.
\newblock Finite topological spaces.
\newblock Notes for REU, Chicago, 2003-2008, 2008.

\bibitem{Morse1939}
A.P. Morse.
\newblock The behavior of a function on its critical set.
\newblock {\em Ann. of Math. (2)}, 40(1):62--70, 1939.

\bibitem{Santalo}
L.A. Santalo.
\newblock {\em Introduction to integral geometry}.
\newblock Hermann and Editeurs, Paris, 1953.

\bibitem{Sard1942}
A.~Sard.
\newblock The measure of the critical values of differentiable maps.
\newblock {\em Bull. Amer. Math. Soc.}, 48:883--890, 1942.

\bibitem{Sard1958}
A.~Sard.
\newblock Images of critical sets.
\newblock {\em Ann. of Math. (2)}, 68:247--259, 1958.

\bibitem{Westover1991}
L.A. Westover.
\newblock Splatting: a parallel feed forward volume rendering algorithm.
\newblock Dissertation at the university of North Carolina at Chapel Hill,
  1991.

\end{thebibliography}

\end{document}